\newtheorem{preproof}{{\bf \indent Proof.}}
\newenvironment{proof}[1]{\begin{preproof}{\rm
               #1}\hfill{$\Box$}}{\end{preproof}}
\newtheorem{cor}{\bf\indent Corollary}[section]
\newtheorem{example}{\bf\indent Example}[section]
\newtheorem{thm}{{\bf\indent Theorem}}[section]
\newtheorem{remark}{\sc\indent Remark}[section]
\newtheorem{lem}{\bf\indent Lemma}[section]
\title{\bf \large Metric dimension in a prime ideal sum graph\\ of a commutative ring \thanks
{{\it Key Words}:  Metric dimension, Resolving set, Prime ideal sum graph,  Commutative ring.\newline
{\indent{~~2010 {\it Mathematics Subject Classification}: 13A15; 13B99; 05C99; 05C25.}}}}
\author{{\normalsize   { {M. Adlifard}$^{\mathsf{a}}$, {{Sh. Niknejad}$^{\mathsf{b}}$, {R. Nikandish${}^{\mathsf{c}}$\thanks{Corresponding author}}}  }
}\vspace{3mm}\\
{\footnotesize{${}^{\mathsf{b}}$\it Department of Mathematics, Roudbar Branch, Islamic Azad University,}}\\
{\footnotesize{\it   Roudbar, Iran}}\\
{\footnotesize{${}^{\mathsf{b}}$\it Department of Mathematics, Gachsaran Branch, Islamic Azad University,}}\\
{\footnotesize{\it   Gachsaran, Iran}}\\
{\footnotesize{${}^{\mathsf{c}}$\it Department of Mathematics, Jundi-Shapur University of Technology,}}\\
{\footnotesize{\rm P.O. BOX \rm{64615-334}, Dezful, Iran}}\\
{\footnotesize{ $\mathsf{m.adlifard@iauroudbar.ac.ir}$}}\quad\quad
{\footnotesize{$\mathsf{Sh.niknejad@gmail.com}$}}\quad\quad
{\footnotesize{ $\mathsf{r.nikandish@ipm.ir}$}}\\
{\footnotesize{$\mathsf{}$ }}}
\date{}
\begin{document}

\maketitle

\begin{abstract}
{\small The prime ideal sum graph of a commutative unital ring $R$,
denoted by $PIS(R)$, is an undirect and simple graph whose vertices are  non-trivial ideals of $R$ and there exists and edge between to distinct vertices  if and only if  their sum is a prime ideal of $R$. In this paper, the metric dimension of $PIS(R)$ is discussed and some formulae for  this parameter in  $PIS(R)$ are given.

}
\end{abstract}
\begin{center}\section{Introduction}\end{center}
\par
Finding metric dimension in a graph is an example of NP-hard identification problem in discrete structures which has many applications not only in robotic but also in chemistry, image processing, combinatorial optimization and so on. The study of this notion was initiated by Slater \cite{slat} in 1975 and Harary \cite{slat} in 1976 , independently and several graph theorists computed this concept in some classes of graphs, see for instance \cite{[5], Ji, Khuller}. Both of wide range of applications  and complexity of computations  have caused considerable attention in
characterizing this invariant for graphs  associated with algebraic structures, some
examples in this direction may be found in  \cite{dol, dolz, eb2, ma, nik, Pirzadaaz, imranaz, Pirzada1, sha}. This paper is in this theme and aims to investigate the  metric dimension in  prime ideal sum graphs of commutative rings.
\par

Throughout this paper, all rings are assumed to be commutative with identity. The sets of all ideals, maximal ideals and jacobson radical of $R$ are denoted by $A(R)$, $\mathrm{Max}(R)$ and $J(R)$, respectively. For a subset $T$ of a ring $R$ we let $T^*=T\setminus\{0\}$. Moreover, by $A(T)$, we mean the set of ideals of $R$ which are contained in $T$.  The ring $R$ is called \textit{reduced} if $0$ is the only nilpotent element of $R$. Some more definitions about commutative rings can be find in \cite{ati}.

  We use the standard terminology of graphs following \cite{west}.
 Throughout this work, all graphs $G=(V,E)$ are simple and connected. As usual, the symbol  $\mathrm{diam}(G)$ represents the diameter of $G$. The distance between two distinct vertices $x$ and $y$ is denoted by $d(x,y)$. Also, a complete graph of order $n$ is denoted by $K_n$.   For a vertex $x$ in $V(G)$, open neighborhood of $x$ is denoted $N(x)$.
Let $G=(V,E)$ be a  connected graph, $S=\{v_1,v_2,\dots,v_k\}$ be an ordered subset of $V$ and $v\in V(G)\setminus S$.
  The vector $D(v|S)=(d(v,v_1),d(v,v_2),\dots, d(v,v_k))$ is called the \textit{representation of} $v$ with respect to $S$. A set $S\subseteq V$ is called a \textit{resolving set for} $G$ if distinct vertices of $G$ have different representations with respect to $S$.
  A resolving set of  minimum cardinality is called \textit{the metric basis for} $G$   and the cardinality of metric basis is called  \textit{the metric dimension of} $G$. The metric dimension of $G$ is denoted by $dim_M(G)$.

Let $R$ be a commutative ring with identity. The prime ideal sum graph of $R$,
denoted by
$PIS(R)$, is a graph whose vertices are   nonzero  proper  ideals of $R$
 and two distinct vertices $I$ and $J$ are adjacent  if and only if  $I+J$ is a prime ideal of $R$. This graph was first introduced and studied in \cite{sumgraph} and many interesting results were explored by the authors. For instance, they investigated connectedness, diameter, girth, clique number and domination number of  $PIS(R)$. In this paper, we study the metric dimension of $PIS(R)$ and provide some metric dimension formulas for  $PIS(R)$.

 {\begin{center}{\section{ Metric dimension of  a  prime ideal sum graph of a reduced ring}}\end{center}}\vspace{-2mm}

In this section, the exact value of metric dimension in a  prime ideal sum graph associated with a reduced ring $R$ in terms of the number of maximal ideals of $R$ is given. First, we need to state the following result.

Suppose that $R$ is a ring. By \cite[Theorem 2.6]{sumgraph}, $PIS(R)$ is disconnected if and only if $R$ is a direct product of two fields. Thus we exclude such situation from the paper.

\begin{lem}\label{dimfinite}
If $R$ is a ring, then the following statements are equivalent.

$(1)$ $dim_M(PIS(R))$ is finite.

 $(2)$ $R$ has only finitely many ideals.
\end{lem}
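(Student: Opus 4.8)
The plan is to prove the two implications separately, with essentially all of the substance lying in $(1)\Rightarrow(2)$. For $(2)\Rightarrow(1)$, if $R$ has only finitely many ideals then $PIS(R)$ has only finitely many vertices, and every finite connected graph $G$ admits the resolving set $V(G)\setminus\{v\}$, so $dim_M(G)\le |V(G)|-1<\infty$; hence $dim_M(PIS(R))$ is finite. Thus the whole difficulty is the converse, which I would establish in contrapositive form: assuming $R$ has infinitely many ideals, i.e. that $PIS(R)$ has infinitely many vertices, I would show that no finite resolving set can exist.

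For the hard direction I would exploit the fact that $PIS(R)$ has small diameter. By the analysis of connectedness and diameter carried out in \cite{sumgraph}, a connected $PIS(R)$ has finite diameter; write $d=\mathrm{diam}(PIS(R))$. The key is then the classical order bound for graphs of bounded diameter: if a connected graph $G$ has diameter $d$ and a resolving set $W$ with $|W|=k$, then each vertex outside $W$ has all its $k$ coordinates in $\{1,2,\dots,d\}$, so its representation lies in $\{1,\dots,d\}^{k}$ and consequently
\[
|V(G)|\le d^{\,k}+k .
\]
Applying this to $G=PIS(R)$, where $|V(PIS(R))|$ is infinite, shows that no finite $k$ can satisfy the inequality. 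Therefore every resolving set of $PIS(R)$ is infinite and $dim_M(PIS(R))$ is not finite, which proves $(1)\Rightarrow(2)$ and closes the equivalence.

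A more hands-on route that avoids quoting the diameter bound is to produce infinitely many mutual \emph{twins}, namely vertices $I,I'$ with the same open neighbourhood $N(I)=N(I')$ when they are non-adjacent, or with the same closed neighbourhood when they are adjacent. Twins have the same distance to every other vertex, so any resolving set can omit at most one member of a class of mutual twins; an infinite twin class then forces $dim_M(PIS(R))=\infty$. Such families are already visible in examples: in $PIS(\mathbb{Z})$ the ideals $(p^{k})$ with $k\ge 2$ are pairwise non-adjacent and share a common neighbourhood, hence are twins, while in $k[x,y]/(x,y)^{2}$ over an infinite field the infinitely many one-dimensional ideals are pairwise adjacent twins.

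I expect the main obstacle to be precisely the passage to the infinite case, i.e. being sure that having infinitely many ideals genuinely rules out a finite resolving set. The diameter route reduces this obstacle to citing the finite-diameter result of \cite{sumgraph} together with the counting bound above, which is why I would take it as the primary argument. The twin route, though conceptually transparent, requires a single uniform construction of an infinite twin class valid for \emph{every} ring with infinitely many ideals; as the examples show, the appropriate family genuinely differs (powers of a maximal ideal for $\mathbb{Z}$ versus the lines in $k[x,y]/(x,y)^{2}$), so that approach would need a short case analysis driven by the ideal-theoretic structure of $R$.
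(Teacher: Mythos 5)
Your primary argument is exactly the paper's proof: both directions match, and the key step for $(1)\Rightarrow(2)$ is the same counting bound $|V(G)|\le d^{k}+k$ obtained from the finite diameter of $PIS(R)$ guaranteed by \cite{sumgraph} (the paper uses $\mathrm{diam}\le 4$ and writes the bound as $5^{n}+n$). Your secondary ``twin'' route is only sketched and, as you note, would need a uniform construction, but it is not needed since the diameter argument is complete.
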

\begin{proof}
{$(1)\Rightarrow (2)$  Assume that $dim_M(PIS(R))$ is finite and  $W=\{I_1,I_2,\dots,I_n\}$ is a metric basis in $PIS(R)$,  for some non-negative integer $n$. Let $I\in V(PIS(R))\setminus W$. Then the number of all possible ways for $D(I|W)$ equals $(4+1)^n$, as $diam(PIS(R))\leq 4$ (see  \cite[Theorem 2.6]{sumgraph}). Thus $|V(PIS(R))|\leq 5^n+n$ and hence  $R$ has finitely many ideals.

$(2)\Rightarrow (1)$ is clear.
}
\end{proof}

 In order to compute $dim_M(PIS(R))$, it is enough to consider Artinian rings $R$ (Indeed, rings with finitely many ideals), by Lemma \ref{dimfinite}. Moreover, every reduced ring with finitely many ideals is a direct product of finitely many fields, by \cite[Theorem 8.7]{ati}. Using these facts, we prove the main result of this section.

\begin{thm}\label{dimprod}
Suppose that $R$ is a reduced ring and  $dim_M(PIS(R))$ is finite. Then the following statements hold:

$(1)$ If $|\mathrm{Max}(R)|=3$, then  $dim_M(PIS(R))=2$.

$(2)$
If $|\mathrm{Max}(R)|=n\geq 4$, then $dim_M(PIS(R))=n$.
\end{thm}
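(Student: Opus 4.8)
The plan is to reduce to a concrete combinatorial model and then separate an upper bound (an explicit resolving set) from a lower bound (a counting obstruction). By Lemma~\ref{dimfinite} together with \cite[Theorem 8.7]{ati}, a reduced ring with $dim_M(PIS(R))$ finite is a finite product of fields, say $R\cong F_1\times\cdots\times F_n$ with $n=|\mathrm{Max}(R)|$. Since each $F_i$ has only trivial ideals, every ideal of $R$ has the form $I_S=\{(x_1,\dots,x_n): x_i=0\text{ for }i\notin S\}$ for a unique $S\subseteq\{1,\dots,n\}$, and $I_S$ is a nonzero proper ideal exactly when $S$ is a nonempty proper subset; thus $|V(PIS(R))|=2^n-2$. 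A short computation shows that $I_S+I_T=I_{S\cup T}$ is prime iff $S\cup T$ is the complement of a singleton, so the adjacency rule is simply $I_S\sim I_T\iff |S\cup T|=n-1$. In particular the maximal ideals are $\mathfrak m_i=I_{\{1,\dots,n\}\setminus\{i\}}$, and they are pairwise non-adjacent.

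First I would establish the key structural fact that $PIS(R)$ has diameter $2$ for every $n\geq 3$. Given non-adjacent $I_S,I_T$ there are two cases: if $|S\cup T|\le n-2$ one may pick an index $a\notin S\cup T$ and check that $\mathfrak m_a$ is adjacent to both; if $S\cup T=\{1,\dots,n\}$, then $S^{c}$ and $T^{c}$ are disjoint and nonempty, and choosing $x\in S^{c}$, $y\in T^{c}$ the vertex $I_{\{1,\dots,n\}\setminus\{x,y\}}$ is adjacent to both. I expect this diameter computation, in particular the second case, to be the main technical obstacle, since the whole lower bound hinges on off-diagonal distances taking only the values $1$ and $2$.

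For the upper bound I would show that $W=\{\mathfrak m_1,\dots,\mathfrak m_n\}$ is a resolving set. The point is that $d(I_S,\mathfrak m_i)=2$ when $i\in S$ while $d(I_S,\mathfrak m_i)\in\{0,1\}$ when $i\notin S$; hence the representation of $I_S$ with respect to $W$ recovers its support via $S=\{i: d(I_S,\mathfrak m_i)=2\}$, so distinct vertices receive distinct representations. This gives $dim_M(PIS(R))\le n$. For $n=3$ I would note that already two maximal ideals resolve (the six supports are separated by whether $1\in S$, whether $2\in S$, together with the distance-$0$ information), giving $dim_M\le 2$.

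For the lower bound I would exploit diameter $2$: for a vertex $v\notin W$ every coordinate of $D(v\mid W)$ equals $1$ or $2$ according to whether the corresponding element of $W$ is a neighbour of $v$, so the representation of $v$ is completely determined by the trace $N(v)\cap W$, while the vertices of $W$ are automatically separated by their $0$-coordinate. Hence $W$ resolves $PIS(R)$ iff $v\mapsto N(v)\cap W$ is injective on $V\setminus W$, which forces $|V\setminus W|\le 2^{|W|}$. Taking $|W|=n-1$ yields the requirement $2^{n}-n-1\le 2^{n-1}$, i.e.\ $2^{n-1}\le n+1$, which fails for all $n\ge 4$; since any superset of a resolving set is resolving, no resolving set of size $\le n-1$ can exist, whence $dim_M(PIS(R))\ge n$. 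Combined with the upper bound this gives $dim_M(PIS(R))=n$ for $n\ge 4$. The same counting with $n=3$ rules out $|W|=1$ (as $5\le 2$ is false, equivalently $PIS(R)$ is not a path), so $dim_M=2$ there, completing both parts.
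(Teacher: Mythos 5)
Your proposal is correct and follows essentially the same route as the paper: the upper bound comes from showing that the set of maximal ideals $\{\mathfrak m_1,\dots,\mathfrak m_n\}$ is a resolving set (distance $1$ versus $2$ detecting whether a coordinate is zero), and the lower bound comes from the same diameter-$2$ counting argument $|V|-k\le 2^k$. The only differences are cosmetic: you verify $\mathrm{diam}=2$ directly in the subset model rather than citing \cite[Theorem 2.6]{sumgraph}, and you make explicit the $n=3$ lower bound that the paper leaves implicit.
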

\begin{proof}
{$(1)$
If $n=3$, then  $R\cong F_1\times F_2\times F_3$, where $F_i$ is a  field for every $1\leq i\leq 3$. Let
 $W=\{(0)\times F_2\times F_3, F_1\times (0)\times F_3\}$. Then by Figure \ref{figure:f1},

$D({(0)\times F_2\times (0)}|W)=(1,2)$,

$D({F_1\times (0)\times (0)}|W)=(2,1)$,

$D({(0)\times (0)\times F_3}|W)=(1,1)$,

$D({F_1\times F_2\times (0)}|W)=(2,2)$.

Thus, $D(I|W)\neq D(J|W)$,  for every $I,J\in V(PIS(R))\setminus W$. Therefore, $dim_M(PIS(R))=2$.
\unitlength=1.5mm
\begin{figure}[htb]
 \centering
\begin{picture}(60,30)(20,-10)
\put (48,12){\circle*{1.2}}
\put (43,14){\tiny{$(0)\times F_2\times F_3$}}
\put (48,6){\line (0,1){6}}
\put (48,6){\circle*{1.2}}
\put (37,7){\tiny{$(0)\times F_2\times (0)$}}
\put (42,0){\circle*{1.2}}
\put (30,1){\tiny{$F_1\times (0)\times (0)$}}
\put (42,0){\line (1,1){6}}
\put (42,0){\line (1,0){12}}
\put (42,0){\line (1,0){12}}
\put (54,0){\circle*{1.2}}
\put (55,0){\tiny{$(0)\times (0)\times F_3$}}
\put (54,0){\line (-1,1){6}}
\put (54,0){\line (-1,2){6}}
\put (36,-3){\circle*{1.2}}
\put (31,-6){\tiny{$F_1\times F_2\times (0)$}}
\put (36,-3){\line (2,1){6}}
\put (36,-3){\line (4,3){12}}
\put (60,-3){\circle*{1.2}}
\put (55,-6){\tiny{$F_1\times (0)\times F_3$}}
\put (60,-3){\line (-2,1){6}}
\put (60,-3){\line (-6,1){18}}
\end{picture}
 \caption{\rm ${PIS}(F_1\times F_2\times F_3) $} \label{figure:f1}
\end{figure}
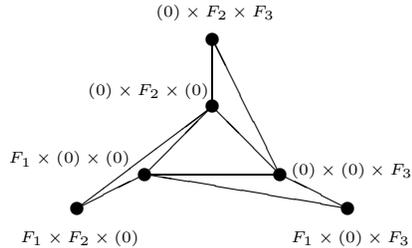

$(2)$ Assume that $n\geq 4$. It follows from Lemma \ref{dimfinite} that $R\cong F_1\times\cdots \times F_n$, where  $F_i$ is a field for every $1\leq i\leq n$.
We show that $dim_M(PIS(R))=n$. Indeed, we have the following claims:

\textbf{Claim 1.}
$dim_M(PIS(R))\geq n$.

By Lemma \ref{dimfinite}, we may suppose that $dim_M(PIS(R))=k$, where $k$ is a positive integer. Since $R$ is a principal ideal ring,  by
  \cite[Theorem 2.6]{sumgraph}, $diam(PIS(R))\in \{1,2\}$ and so there are  $2^k$ possible ways for $D(I|W)$, if $I\in V(PIS(R))\setminus W$. This implies that $|V(PIS(R))|-k\leq 2^k$. It is clear that $|V(PIS(R))|=2^n-2$ and thus $2^n-2-k\leq 2^k$. Hence  $2^n\leq 2^k+2+k$. Now,
$n\geq 4$ implies that $k\geq n$. Therefore, $dim_M(PIS(R))\geq n$.

\textbf{Claim 2.}
$dim_M(PIS(R))\leq n$.

For every $1\leq i\leq n$, let $\mathfrak{m}_i=(I_1,I_2,\dots,I_n)\in A(R)^*$  such that $I_i=0$ and $I_j=F_j$, for every $1\leq j\leq n$ with $i\neq j$. Put
$W=\{\mathfrak{m}_1,\mathfrak{m}_2,\dots,\mathfrak{m}_n\}$ (Obviously,  $W=\mathrm{Max}(R)$).
We prove that $W$ is a resolving set for $PIS(R)$. Let $I,J\in V(PIS(R))\setminus W$  and $I\neq J$. We have to show that  $D(I|W)\neq D(J|W)$. Let $I=(I_1,I_2,\dots,I_n)$ and $J=(J_1,J_2,\dots,J_n)$. Since $I\neq J$, there exists $1\leq i\leq n$ such that either $I_i=0$ and $J_i=F_i$ or $I_i=F_i$ and $J_i=0$. Without loss of generality, assume that $I_1=0$ and $J_1=F_1$. It is not hard to check that
$d(I,\mathfrak{m}_1)=1$ and $d(J,\mathfrak{m}_1)=2$. This clearly shows that $D(I|W)\neq D(J|W)$, as desired.

 Now, Claims 1, 2 show that $dim_M(PIS(R))=n$, if $n\geq 4$.
}
\end{proof}

{\begin{center}{\section{ Metric dimension of  a  prime ideal sum graph  of a  non-reduced ring}}\end{center}}\vspace{-2mm}

In this section, $dim_M(PIS(R))$ is investigated, in case $R$ is non-reduced.

We begin with the following lemma which provides an upper bound for $dim_M(PIS(R))$, when $R$ is a direct product of finitely many rings with one non-trivial ideal.

\begin{lem}\label{lem2n}
 Suppose that $R\cong R_1\times\cdots \times R_n$ and every $R_i$ is a ring with a unique non-trivial ideal $\mathfrak{m}_i$,  for $1\leq i\leq n$. Then
   $dim_M(PIS(R))\leq2n$.
\end{lem}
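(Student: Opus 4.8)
The plan is to exhibit an explicit resolving set of cardinality $2n$. First I would pin down the combinatorial structure of the vertices. Since $\mathfrak{m}_i$ is the unique non-trivial ideal of $R_i$, the lattice of ideals of $R_i$ is $\{0,\mathfrak{m}_i,R_i\}$ (so that $R_i$ is local with $\mathfrak{m}_i^2=0$, is not a domain, and $\mathrm{Spec}(R_i)=\{\mathfrak{m}_i\}$), and every ideal of $R$ is a product $I=(I_1,\dots,I_n)$ with $I_j\in\{0,\mathfrak{m}_j,R_j\}$. Thus a vertex of $PIS(R)$ is encoded by a tuple over the three symbols $\{0,\mathfrak{m},R\}$, excluding the all-$0$ and all-$R$ tuples, giving $3^n-2$ vertices. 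The second preliminary step is to record that the prime ideals of $R$ are exactly $\mathfrak{p}_i:=R_1\times\cdots\times R_{i-1}\times\mathfrak{m}_i\times R_{i+1}\times\cdots\times R_n$ for $1\le i\le n$, since $\mathrm{Spec}(R_1\times\cdots\times R_n)=\bigsqcup_i\mathrm{Spec}(R_i)$ and each factor contributes only $\mathfrak{m}_i$.

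With this in hand, the candidate basis is $W=\{\mathfrak{p}_1,\dots,\mathfrak{p}_n\}\cup\{A_1,\dots,A_n\}$, where $A_i$ is the ideal whose $i$-th coordinate is $0$ and whose remaining coordinates equal $R_j$; for $n\ge 2$ each of these is a genuine nonzero proper ideal, and the degenerate case $n=1$ is trivial. The heart of the argument is to show that these $2n$ ideals act as coordinate-wise detectors. Computing $I+J$ componentwise and using that $I+J$ is prime if and only if it equals some $\mathfrak{p}_k$, I would verify the two equivalences $d(I,\mathfrak{p}_i)=1\iff I_i\ne R_i$ (equivalently $\mathfrak{p}_i+I=\mathfrak{p}_i$) and $d(I,A_i)=1\iff I_i=\mathfrak{m}_i$ (equivalently $A_i+I=\mathfrak{p}_i$). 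Both hold because $\mathfrak{p}_i$ and $A_i$ carry the full ring $R_j$ in every coordinate $j\ne i$, so those coordinates of the sum are automatically $R_j$ and impose no constraint; the primality of the sum is then governed entirely by coordinate $i$.

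From these two equivalences the conclusion is immediate: for any vertex $I\notin W$, reading off the pair $(d(I,A_i),d(I,\mathfrak{p}_i))$ recovers $I_i$ for each $i$ (it is $\mathfrak{m}_i$ when the first distance is $1$; otherwise $0$ when the second distance is $1$; otherwise $R_i$), so the representation $D(I|W)$ determines the whole tuple, hence $I$ itself. Distinct vertices therefore receive distinct representations, $W$ is a resolving set, and $dim_M(PIS(R))\le 2n$. I expect the main obstacle to be the careful verification of the two neighborhood computations: because adjacency in $PIS(R)$ couples all $n$ coordinates at once (the sum must be prime as a whole), one must argue cleanly that $\mathfrak{p}_i$ and $A_i$ isolate a single coordinate. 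A useful simplification worth emphasizing is that only the distance-$1$ versus distance-$\ge 2$ dichotomy is used, so no control of the full diameter is needed.
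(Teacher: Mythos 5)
Your proposal is correct and uses exactly the same resolving set as the paper: your $\{A_1,\dots,A_n\}$ and $\{\mathfrak{p}_1,\dots,\mathfrak{p}_n\}$ are precisely the paper's sets $A$ (ideals with one zero coordinate and the rest full) and $B=\mathrm{Max}(R)$. Your verification—establishing the two equivalences $d(I,\mathfrak{p}_i)=1\iff I_i\neq R_i$ and $d(I,A_i)=1\iff I_i=\mathfrak{m}_i$, so that the pair of distances recovers each coordinate $I_i$ and hence $I$ itself—is a cleaner and tighter route to the same conclusion than the paper's case analysis on whether $I$ and $J$ lie in $J(R)$, but the underlying idea is identical.
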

\begin{proof}
{ Suppose that

 $A=\{(0,R_2,\dots,R_n), (R_1,0,R_3,\dots,R_n),\dots,(R_1,\dots,R_{n-1},0)\}$,

 $B=\{(\mathfrak{m}_1,R_2,\dots,R_n), (R_1,\mathfrak{m}_2,R_3,\dots,R_n),\dots,(R_1,\dots,R_{n-1},\mathfrak{m}_n)\}$  (in fact $B={\rm Max}(R)$) and

$W=A\cup B$.

 It is proved that $W$ is a resolving set in $PIS(R)$. Let $I=(I_1,\dots,I_n),J=(J_1,\dots,J_n)\in A(R_i)^*\setminus W$. It is enough to show that
$D(I|W)\neq D(J|W)$.

We have to consider the following cases:

\textbf{Case 1.}
$I\subseteq J(R)$. If $J\subseteq J(R)$, too, then with no loss of generality, one may assume that
$I_1=0$ and $J_1=\mathfrak{m}_1$, as $I\neq J$. We have $D(I|W)=(2,\underbrace{-,\cdots,-}_{n-1},\underbrace{1,\cdots,1}_n)$
and $D(J|W)=(1,\underbrace{-,\cdots,-}_{n-1},\underbrace{1,\cdots,1}_n)$.
Therefore, $D(I|W)\neq D(J|W)$. If  $J\nsubseteq J(R)$, then with no loss of generality, one may assume that
 $J_n=R_n$. Thus $D(I|W)=(\underbrace{-,\cdots,-}_{n},\underbrace{1,\cdots,1}_n)$
and $D(J|W)=(\underbrace{-,\cdots,-}_{n-1},2,\underbrace{-,\cdots,-}_{n-1},2)$.
Therefore, $D(I|W)\neq D(J|W)$.

\textbf{Case 2.} $I\nsubseteq J(R)$ and $J\nsubseteq J(R)$.
In this case, $I_i=R_i$ and $J_j =R_j$, for some $1\leq i,j\leq n$.
If for every $1\leq i\leq n$, $I_i=R_i$ if and only if $J_i =R_i$, then since
  $I\neq J$,  for some $1\leq i\leq n$, $\{I_i,J_i\}\subseteq \{0,\mathfrak{m}_i\}$. Without lost of generality, we may assume that $I_1=0$ and $J_1 =\mathfrak{m}_1$.
  Now, we have $D(I|W)=(2,\underbrace{-,\cdots,-}_{n-1},\underbrace{x_1,\cdots,x_n}_n)\neq (1,\underbrace{-,\cdots,-}_{n-1},\underbrace{x_1,\cdots,x_n}_n)=D(J|W)$ such that $x_i\in \{1,2\}$ and there exists
  $1\leq i\leq n$ such that  $x_i=2$.

  If  $I_i=R_i$ and  $J_i \neq R_i$, for some $1\leq i\leq n$, then $\{J_i\}\subseteq \{0,\mathfrak{m}_i\}$. Without lost of generality, assume that $I_1=R_1$ and $J_1 =0$ or $J_1 =\mathfrak{m}_1$.
   Thus $D(I|W)=(2,\underbrace{-,\cdots,-}_{n-1},2,\underbrace{x_2,\cdots,x_n}_{n-1})$, but  $D(J|W) =(2,\underbrace{-,\cdots,-}_{n-1},1,\underbrace{x_2,\cdots,x_{n-1}}_{n-1})$, if  $J_1 =0$ and
   $D(J|W) =(1,\underbrace{-,\cdots,-}_{n-1},1,\underbrace{x_2,\cdots,x_{n-1}}_{n-1})$, if  $J_1 =\mathfrak{m}_1$.
  The above arguments show that  $D(I|W)\neq D(J|W)$.
   Finally,  we note that the situation $I\nsubseteq J(R)$ and $J\subseteq J(R)$ is similar to  $J\nsubseteq J(R)$ and $I\subseteq J(R)$ in Case 1.
Therefore, $W$ is a resolving set for
  $PIS(R)$. This implies that
$dim_M(PIS(R))\leq |W|= 2n$.
 }
\end{proof}

The following example shows that the upper bound $2n$ in Lemma \ref{lem2n} is sharp.
\begin{example}\label{11001}
\end{example}
{$(1)$ \rm Let $n=2$ in Lemma \ref{lem2n} and
$W=\{(R_1, 0),(0, R_2),(R_1,\mathfrak{m}_2)\}$.
Then
$D({(\mathfrak{m}_1,R_2)}|W)=(2,1,2)$,
$D({(0,\mathfrak{m}_2)}|W)=(1,2,1)$,
$D({(\mathfrak{m}_1,0)}|W)=(2,1,1)$,
$D({(\mathfrak{m}_1,\mathfrak{m}_2)}|W)=(1,1,1)$
 and hence $W$ is a resolving set for $PIS(R)$. This means that    $dim_M(PIS(R)\leq 3$. On the other hand, since
 $diam(PIS(R))=2$ and $|V(PIS(R))|=7$, we conclude that $dim_M(PIS(R)\geq 3$. So
   $dim_M(PIS(R))=3$.

$(2)$  Let $n=3$ in Lemma \ref{lem2n}.
 By Lemma \ref{lem2n},
   $dim_M(PIS(R))\leq 6$. But we show that   $dim_M(PIS(R))= 5$. Since $|V(PIS(R)))|=25$ and  $diam(PIS(R))=2$, we deduce that $dim_M(PIS(R))\geq 5$.
   To complete the proof,  let

$W=\{(0, R_2, R_3),(R_1,0, R_3),(R_1,R_2,0), (R_1,\mathfrak{m}_2,\mathfrak{m}_3),(\mathfrak{m}_1,\mathfrak{m}_2,R_3)\}$.

 Then we have

  $D({(\mathfrak{m}_1,\mathfrak{m}_2,\mathfrak{m}_3)}|W)=(1,1,1,2,2)$,

    $D({(\mathfrak{m}_1,\mathfrak{m}_2,0)}|W)=(1,1,2,2,2)$,

    $D({(\mathfrak{m}_1,0,\mathfrak{m}_3)}|W)=(1,2,1,2,2)$,

    $D({(0,\mathfrak{m}_2,\mathfrak{m}_3)}|W)=(2,1,1,2,2)$,

    $D({(\mathfrak{m}_1,0,0)}|W)=(1,2,2,2,2)$,

    $D({(0,\mathfrak{m}_2,0)}|W)=(2,1,2,2,2)$,

     $D({(0,0,\mathfrak{m}_3)}|W)=(2,2,1,2,2)$,

     $D({(R_1,0,0)}|W)=(2,2,2,2,1)$,

   $D({(0,R_2,0)}|W)=(2,2,2,1,1)$,

   $D({(0,0,R_3)}|W)=(2,2,2,1,2)$,

    $D({(\mathfrak{m}_1,R_2,R_3)}|W)=(1,2,2,2,1)$,

    $D({(R_1,\mathfrak{m}_2,R_3)}|W)=(2,1,2,1,1)$,

    $D({(R_1,R_2,\mathfrak{m}_3)}|W)=(2,2,1,1,2)$,

      $D({(\mathfrak{m}_1,R_2,\mathfrak{m}_3)}|W)=(1,2,1,1,1)$,

      $D({(R_1,\mathfrak{m}_2,0)}|W)=(2,1,2,2,1)$,

      $D({(R_1,0,\mathfrak{m}_3)}|W)=(2,2,1,2,1)$,

      $D({(\mathfrak{m}_1,R_2,0)}|W)=(1,2,2,1,1)$,

      $D({(0,R_2,\mathfrak{m}_3)}|W)=(2,2,1,1,1)$,

      $D({(0,\mathfrak{m}_2,R_3)}|W)=(2,1,2,1,2)$,

      $D({(\mathfrak{m}_1,0,R_3)}|W)=(1,2,2,1,2).$

      It is not hard to see that  $D(I|W)\neq D(J|W)$, for all $I,J \in A(R)^*\setminus W$. Thus $W$ is a resolving set in $PIS(R)$ and so $dim_M(PIS(R))= 5$.
}

 Lemma \ref{lem2n} enables us  to give the exact value of $dim_M(PIS(R))$, when $R$ is a direct product of finitely many rings with exactly one non-trivial ideal.

\begin{thm}\label{isomorphismq}
  Suppose that $R\cong R_1\times\cdots \times R_n$ and every $R_i$ is a ring with a unique non-trivial ideal $\mathfrak{m}_i$,  for $1\leq i\leq n$. Then the following statements hold:

 $(1)$ If $n=1$, then    $dim_M(PIS(R))=0$.

 $(2)$ If $n=2$ or $n=3$,  then    $dim_M(PIS(R))=2n-1$.

  $(3)$ If $n\geq 4$, then
   $dim_M(PIS(R))=2n$.
\end{thm}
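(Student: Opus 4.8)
The plan is to reduce to the product form, read the combinatorics of $PIS(R)$ off the coordinatewise description of ideals, and then treat the three ranges of $n$ separately; the real work lies in the lower bound of part $(3)$.

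First I would record the normal form and the adjacency rule. Each $R_i$ is local with ideal chain $0\subsetneq\mathfrak m_i\subsetneq R_i$ (and $\mathfrak m_i$ is automatically principal), so an ideal of $R$ is a vector $I=(I_1,\dots,I_n)$ with $I_t\in\{0,\mathfrak m_t,R_t\}$, the vertices being all such vectors other than $0$ and $R$; hence $|V(PIS(R))|=3^n-2$. Since $R$ is a principal ideal ring, \cite[Theorem 2.6]{sumgraph} gives $\mathrm{diam}(PIS(R))=2$ for $n\ge 2$. Writing $M(I)=\{t:I_t=\mathfrak m_t\}$ and calling $t$ \emph{non-full for }$I$ when $I_t\neq R_t$, a direct analysis of when $I+J$ is prime yields the clean rule: $I$ and $J$ are adjacent iff they have exactly one common non-full coordinate $t$ and at least one of $I_t,J_t$ equals $\mathfrak m_t$. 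I will also write $a_t$ for the ideal equal to $0$ in coordinate $t$ and $R$ elsewhere, and $\mathfrak p_t$ for the maximal ideal equal to $\mathfrak m_t$ in coordinate $t$ and $R$ elsewhere.

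Part $(1)$ is immediate, since $PIS(R)$ is a single vertex. For the upper bounds, part $(3)$ is supplied by Lemma \ref{lem2n} ($\le 2n$) and part $(2)$ by the resolving sets in Example \ref{11001} ($\le 2n-1$ for $n=2,3$). The matching lower bounds for $n=2,3$ come from the diameter-$2$ counting bound: a resolving set $W$ of size $k$ injects $V\setminus W$ into $\{1,2\}^k$, so $3^n-2-k\le 2^k$, which forces $k\ge 3$ when $n=2$ and $k\ge 5$ when $n=3$, i.e. $k\ge 2n-1$. For $n\ge4$ this count is too weak (it gives at most $k\ge 2n-1$, and less as $n$ grows), so the lower bound $dim_M\ge 2n$ needs a structural argument. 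Here I would isolate two vertex sets on which resolution is forced by \emph{disjoint} parts of $W$. Let $\mathcal J=\{I\neq0:I_t\in\{0,\mathfrak m_t\}\ \forall t\}$ (the $2^n-1$ nonzero ideals inside $J(R)$) and $\mathcal D=\{I:I_t\in\{0,R_t\}\ \forall t,\ 1\le|F(I)|\le n-2\}$ (the $2^n-n-2$ ``$0/R$'' ideals that are neither an $a_t$ nor trivial). The adjacency rule gives $N(I)=\{\mathfrak p_1,\dots,\mathfrak p_n\}\cup\{a_t:t\in M(I)\}$ for $I\in\mathcal J$, so on $\mathcal J$ every $\mathfrak p_u$ is at distance $1$ and every non-special vertex at distance $2$; only the $a_t$ vary. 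Thus the representation of $I\in\mathcal J\setminus W$ depends only on $M(I)\cap\{t:a_t\in W\}$, forcing $|W\cap\{a_1,\dots,a_n\}|+|W\cap\mathcal J|\ge n$. Dually, for $I\in\mathcal D$ the separating landmarks are exactly those with $M\neq\varnothing$ that cut $\mathcal D$ (the $\mathfrak p_u$ do so, via $d(I,\mathfrak p_u)=1\iff u\notin F(I)$) together with the members of $\mathcal D$ themselves; every landmark with $M=\varnothing$ and every vertex of $\mathcal J$ is at constant distance $2$ on $\mathcal D$. Since $|\mathcal D|=2^n-n-2>2^{n-1}$ exactly when $n\ge4$, resolving $\mathcal D$ forces at least $n$ of these helpers. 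As the $\mathcal J$-helpers lie in $\{a_t\}\cup\mathcal J$ while the $\mathcal D$-helpers have $M\neq\varnothing$ or lie in $\mathcal D$, the two helper sets are disjoint, so $|W|\ge n+n=2n$.

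The step I expect to be the main obstacle is verifying the two ``only these landmarks help'' claims together with the disjointness of the helper sets — in particular that each vertex of $\mathcal J$ is at distance $2$ from every vertex of $\mathcal D$ (so it cannot help resolve $\mathcal D$), and that the two counting lower bounds ``$\ge n$'' are valid. This is also precisely where the threshold $n=4$ enters: for $n\le 3$ one has $|\mathcal D|\le 2^{n-1}$, the $\mathcal D$-task then needs fewer than $n$ helpers, and the saving of a single landmark becomes possible, which is what produces the value $2n-1$ in part $(2)$.
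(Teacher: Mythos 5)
Your proposal is correct and follows essentially the same route as the paper: the upper bounds come from Lemma \ref{lem2n} and the explicit resolving sets of Example \ref{11001}, the lower bounds for $n=2,3$ from the diameter-two counting inequality, and for $n\ge 4$ from splitting the vertices into the ideals inside $J(R)$ and the $0/R$-pattern ideals and counting the landmarks each class forces — exactly the two claims in the paper's proof of part $(3)$. If anything, your version is slightly more careful than the paper's, since you account for landmarks that may themselves lie in $\mathcal{J}$ or $\mathcal{D}$ and verify explicitly that the two forced helper sets are disjoint.
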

\begin{proof}
{$(1)$ If $n=1$, then the result is obvious.

 $(2)$ If $n=2$ or $n=3$,  then  the result follows from Example \ref{11001}.

$(3)$ Let $n\geq 4$,

$A=\{(0,R_2,\dots,R_n), (R_1,0,R_3,\dots,R_n),\dots,(R_1,\dots,R_{n-1},0)\}$,

 $B=\{(\mathfrak{m}_1,R_2,\dots,R_n), (R_1,\mathfrak{m}_2,R_3,\dots,R_n),\dots,(R_1,\dots,R_{n-1},\mathfrak{m}_n)\}$ and

$C=A(R_i)^*\setminus A\cup B$.

We claim that $A\cup B\subseteq W$, where
$W$ is  a resolving set for  $PIS(R)$.
First, we show that $A\subseteq W$ (Indeed, we are going to prove that all ideals contained in $J(R)\setminus W$ are resolved only by (some) vertices contained in $A$). Let $|A(J(R)) \cap W|=t$ and $I\in A(J(R)) \cap W$. Since  $|A(J(R))|=2^n-1$ and  $d(I,K)=1$ for every $K \in B$ and $d(I,K)=2$ for every $K \in C$ with $I\neq K$, we must have $2^n-1-t \leq 2^{|S|}$, where $S \subseteq A$  is a resolving set for $A(J(R)\setminus W)$. If $S\neq A$, then since  $|S| \leq n-1$, we infer that $2^n-1-t \leq 2^{n-1}$, a contradiction (as $n \geq 4$). Hence $A \subseteq W$. Next, we show that $B\subseteq W$. Let
$T=\{(I_1,\dots, I_n)\,\,|\,\, I_i \in  \{0,R_i \}\}$. Since $d(I,J)=2$, for every $I \in T$ and $J\in A$, we must add some elements of $A(R)\setminus A$ to $W$ for resolving  elements in $T\setminus W$.
 Since $|T|=2^n-2$, we must add at last $n $ elements of $A(R)\setminus A$ to $W$(In fact we must add at last $t$ elements till $|T\setminus A|\leq 2^t$). This proves the claim and so   $dim_M(PIS(R))= |W| \geq 2n$ . Now, the result follows from Lemma \ref{lem2n}.
 }
\end{proof}

The following remark has a key role in proof of Theorem \ref{isomorphismqq}.

\begin{remark}\label{isomorphisms}
For a connected graph $G$, if
$V_1,V_2,\dots,V_k$ is a partition of $V(G)$ such that for every $1\leq i\leq k$,  $x,y\in V_i$ implies that  $N(x)=N(y)$.   Then  $dim_M(G)\geq |V(G)|-k$.
\end{remark}

\begin{thm}\label{isomorphismqq}
 Suppose that $R\cong R_1\times\cdots \times R_n$, where $(R_i, \mathfrak{m}_i)$ is an Artinian local principal ideal ring and $|A(R_i)^*|\geq 2$, for every $1\leq i\leq n$. Then the following statements hold:

  $(1)$ If either $n\neq 1$ or  $n=1$ and $|A(R_i)^*|\geq 3$, then   $dim_M(PIS(R))=|A(R)^*|-3^{n}+1$.

   $(2)$ If $n=1$ and $|A(R_i)^*|=2$, then   $dim_M(PIS(R))=1$.
\end{thm}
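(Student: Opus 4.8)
The plan is to reduce the whole problem to a combinatorial analysis of the coordinatewise ``shape'' of an ideal and then feed this into Remark \ref{isomorphisms}. Since each $R_i$ is an Artinian local principal ideal ring, its ideals form a chain $R_i\supsetneq\mathfrak{m}_i\supsetneq\mathfrak{m}_i^2\supsetneq\cdots\supsetneq 0$, and the unique prime of $R_i$ is $\mathfrak{m}_i$; hence the primes of $R$ are exactly the ideals $R_1\times\cdots\times\mathfrak{m}_i\times\cdots\times R_n$. First I would attach to each vertex $I=(I_1,\dots,I_n)$ a \emph{type vector} $\tau(I)=(t_1,\dots,t_n)$, where $t_i=R$ if $I_i=R_i$, $t_i=M$ if $I_i=\mathfrak{m}_i$, and $t_i=0$ if $I_i\subseteq\mathfrak{m}_i^2$. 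A direct check shows that for two vertices the coordinate sum $I_i+J_i$ equals $R_i$ iff $t_i(I)=R$ or $t_i(J)=R$, equals $\mathfrak{m}_i$ iff $\{t_i(I),t_i(J)\}\subseteq\{M,0\}$ with at least one entry $M$, and lies inside $\mathfrak{m}_i^2$ iff $t_i(I)=t_i(J)=0$. Consequently $I$ and $J$ are adjacent iff exactly one coordinate sums to $\mathfrak{m}_i$ and every other coordinate sums to $R_j$; in particular adjacency depends only on $\tau(I)$ and $\tau(J)$.

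For the lower bound I would partition $V(PIS(R))$ by type vector. There are $3^n$ type vectors; the all-$R$ vector corresponds only to $R$ itself, so it is the unique empty class, and I would check, using $\mathfrak{m}_i^2\neq 0$ (which follows from $|A(R_i)^*|\geq 2$), that each of the remaining $3^n-1$ type vectors is realized by a vertex. Because adjacency is type-determined, two vertices of the same type have the same neighbourhood away from themselves; moreover a type is self-adjacent only when it has exactly one $M$ and all other entries $R$ (the maximal ideals), and such a type is realized by a single vertex. Hence inside every class the open neighbourhoods coincide, and Remark \ref{isomorphisms} yields $dim_M(PIS(R))\geq |V(PIS(R))|-(3^n-1)=|A(R)^*|-3^n+1$.

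For the upper bound, note that $R$ is a principal ideal ring, so $diam(PIS(R))\leq 2$ by \cite[Theorem 2.6]{sumgraph}; thus distinct vertices are at distance $1$ or $2$, and a set resolves iff it separates vertices by their adjacency pattern. Delete one vertex from each of the $3^n-1$ nonempty classes to form $W_0$, so $|W_0|=|A(R)^*|-3^n+1$; every undeleted vertex is trivially separated from all others, so I only need the deleted representatives to be pairwise separated by $W_0$. Call a type \emph{multi-membered} if its class has at least two vertices; because $|A(R_i)^*|\geq 2$ forces at least two ideals inside $\mathfrak{m}_i$ in each coordinate, these are exactly the types with at least one entry $0$, the lone exception being the all-$0$ type when $n=1$ and $|A(R_1)^*|=2$, which part (1) excludes. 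Each multi-membered class keeps a vertex in $W_0$, so it suffices to prove that for distinct realized types $\tau\neq\tau'$ there is a multi-membered type adjacent to exactly one of them. To see this I would pick a coordinate $c$ with $\tau_c\neq\tau'_c$: if one of $\tau_c,\tau'_c$ is $M$, the multi-membered type ``$0$ in coordinate $c$, $R$ elsewhere'' is adjacent to a type precisely when its $c$-entry is $M$, and so separates them; if $\{\tau_c,\tau'_c\}=\{R,0\}$, say $\tau_c=R$, then since $\tau$ is not the all-$R$ vector there is a coordinate $d\neq c$ with $\tau_d\in\{M,0\}$, and the multi-membered type ``$M$ in coordinate $d$, $0$ in coordinate $c$, $R$ elsewhere'' is adjacent to $\tau$ but never to $\tau'$, because its $c$-coordinate forces the sum into $\mathfrak{m}_c^2$. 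This establishes the claim, so $W_0$ resolves and $dim_M(PIS(R))\leq |A(R)^*|-3^n+1$, giving (1).

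Part (2) is immediate: if $n=1$ and $|A(R_1)^*|=2$, the only nonzero proper ideals are $\mathfrak{m}_1$ and $\mathfrak{m}_1^2$, their sum $\mathfrak{m}_1$ is prime, so $PIS(R)\cong K_2$ and $dim_M(PIS(R))=1$. The hard part will be the upper bound: one must guarantee that after deleting a representative from every class the representatives remain mutually resolvable, and the delicate point is that a neighbour witnessing a difference could itself have been deleted — this is exactly why the argument is routed through multi-membered probe types, whose classes are never emptied by the deletion, and why the degenerate case $n=1,\,|A(R_1)^*|=2$ must be separated out.
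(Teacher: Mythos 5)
Your proposal is correct and follows essentially the same route as the paper: the same partition of $V(PIS(R))$ into $3^n-1$ classes according to whether each component equals $R_i$, equals $\mathfrak{m}_i$, or is properly contained in $\mathfrak{m}_i$, combined with Remark \ref{isomorphisms} for the lower bound, and the complement of a one-per-class transversal as the resolving set for the upper bound. Your verification that the transversal's representatives are mutually resolved is organized more cleanly (via type-determined adjacency and ``multi-membered'' probe classes, rather than the paper's explicit $T_1,T_2,T_3$ case analysis), and you make explicit two points the paper leaves implicit --- that the only self-adjacent type class is a singleton, so $N(x)=N(y)$ really holds within classes, and the degenerate case $n=1$, $|A(R_1)^*|=2$ --- but the underlying argument is the same.
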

\begin{proof}
{Suppose that  $I= (I_1,\dots, I_n)$ and $J= (J_1,\dots,J_n)$ are vertices of $PIS(R)$.
  Define the relation  $\thicksim$ on  $V(PIS(R))$ as follows: $X\thicksim Y$, whenever for every $1\leq i\leq n$, the following conditions hold.

 $(1)$``$I_i=\mathfrak{m}_i$ if and only if $ J_i=\mathfrak{m}_i$''.

 $(2)$``$I_i\subseteq \mathfrak{m}_i$ with $I_i\neq \mathfrak{m}_i$ if and only if $J_i\subseteq \mathfrak{m}_i$ with $J_i\neq \mathfrak{m}_i$.

  $(3)$``$I_i=R_i$ if and only if $J_i=R_i$''.

  Clearly, $\thicksim$ is an equivalence relation on $V(PIS(R))$.
   The equivalence class of $I$ is denoted by
   $[I]$. Suppose that
  $X$ and $Y$ are two elements of  the equivalence class of $I$. By definition of the relation  $\thicksim$ and adjacency condition
for   $X$ and $Y$,
 one can check that $N(X)=N(Y)$.
 Now, Remark \ref{isomorphisms} and the fact that  ``the number of equivalence classes is $3^n-1$" imply that $dim_M(PIS(R))\geq|A(R)^*|-(3^{n}-1)=|A(R)^*|-3^{n}+1$.

 To complete the proof, it is shown that $dim_M(PIS(R))\leq |A(R)^*|-3^{n}+1$.
  Let $ (K_1,\dots, K_n)\in V(PIS(R))$ be a fixed vertex such that   for every $1\leq i\leq n$, $0\neq K_i\subset \mathfrak{m}_i$ and
  $$A=\{(I_1,\dots,I_n)\in V(PIS(R))|\,\, I_i\in \{0,R_i,\mathfrak{m}_i\}\,\,\mathrm{for\,\, every}\,\,1\leq i\leq n\}\cup
\{(K_1,\dots, K_n)\}.$$

 We claim that
$W=A(R)^*\setminus A$  is a resolving set for $PIS(R)$. Let $I,J\in A$  and $I\neq J$. It is enough to show that  $D(I|W)\neq D(J|W)$.
Consider the following sets:

  $V_1=\{(I_1,\dots,I_n)\in V(PIS(R))|\,\, I_i\in \{0,R_i\}\,\,\mathrm{for\,\, every}\,\,1\leq i\leq n\}$,

 $V_2=\{(I_1,\dots,I_n)\in V(PIS(R))|\,\, I_i\in \{0,\mathfrak{m}_i\}\,\,\mathrm{for\,\, every}\,\,1\leq i\leq n\}$,

$V_3=A\setminus \{V_1\cup V_2\}$,

  $T_1=\{(K_1,R_2\dots,R_n), (R_1,K_2,R_2\dots,R_n),\dots,(R_1,\dots,R_{n-1},K_n)\}$,

 $T_2=\{(I_1,\dots,I_n)\in V(PIS(R))|\,\, I_i=K_i,I_j=\mathfrak{m}_j\,\mathrm{with\,\, i\neq j} \,\mathrm{and \,\,for\,\, every}\,k\in \{1,\dots, n\}\setminus \{i,j\}, I_k=R_k\}$ and

$T_3=W\setminus \{T_1\cup T_2\}$.

Arrange the elements of $W$ as follows:

$$W=\{\underbrace{w_1,\dots,w_{|T_1|}}_{\in T_1},\underbrace{w_{|T_1|+1},\dots,w_{|T_1|+|T_2|}}_{\in T_2},\underbrace{w_{|T_1|+|T_2|+1},\dots,w_{|W|}}_{\in T_3}\}.$$

If $I\in V_1$, then

$$D(I|W)=(\underbrace{2,\dots,2}_{|T_1|},\underbrace{y_1,\dots,y_{|T_2|}}_{|T_2|},\underbrace{z_1,\dots,z_{|T_3|}}_{|T_3|}),$$

 such that $y_i\in \{1,2\}$ and  $y_i\neq y_j$, for some
  $1\leq i,j\leq |T_2|$ with $i\neq j$.

If $I\in V_2$, then

$$D(I|W)=(\underbrace{x_1,\dots,x_{|T_1|}}_{|T_1|},\underbrace{2,\dots,2}_{|T_2|},\underbrace{z_1,\dots,z_{|T_3|}}_{|T_3|}),$$

 such that $x_i\in \{1,2\}$ and  $x_i\neq x_j$, for some
  $1\leq i,j\leq |T_1|$ with $i\neq j$.

  Finally, if $I\in V_3$, then

  $$D(I|W)=(\underbrace{x_1,\dots,x_{|T_1|}}_{|T_1|},\underbrace{y_1,\dots,y_{|T_2|}}_{|T_2|},\underbrace{z_1,\dots,z_{|T_3|}}_{|T_3|}),$$

   such that $x_i=y_j=1$, where
  $1\leq i\leq |T_1|$ and $1\leq j\leq |T_2|$.

  The above vectors shows that $D(I|W)\neq D(J|W)$, if $I\in V_i$ and $J\in V_j$, for every   $1\leq i,j\leq 3$ with $i\neq j$. Next, we  show that $D(I|W)\neq D(J|W)$, if $I,J\in V_i$, for every   $1\leq i\leq 3$.
   The following cases should be considered:

\textbf{Case 1.} $I,J\in V_1$.
Since $I\neq J$, with no loss of generality, one may assume that $I_i=0$ and $J_i=R_i$, for some $1\leq i\leq n$. Moreover, $J_i=R_i$ implies that the zero element must appear among components of $J$. Let $J_k=0$ ($J_k$ is the $k$-th component of $J$), for some $k\neq i$.   Consider $y=(y_1,\dots,y_n)\in T_2$  such that
$y_i=K_i$ and $y_k=\mathfrak{m}_k$. Moreover, the other components of $y$ are  $R_j$'s, $j\neq i,k$.
Now, $d(I,y)=2$ and $d(J,y)=1$ and so $D(I|W)\neq D(J|W)$.

\textbf{Case 2.} $I,J\in V_2$.
Since $I\neq J$, without lost of generality, we may assume that $I_i=0$ and $J_i =\mathfrak{m}_i$, for some $1\leq i\leq n$.
Consider $x=(x_1,\dots,x_n)\in T_1$  such that
$x_i=K_i$. Moreover, the other components of $x$ are $R_j$'s, $j\neq i$.
Since $d(I,x)=2\neq 1=d(J,x)$, $D(I|W)\neq D(J|W)$.

\textbf{Case 3.} $I,J\in V_3$.
  The following subcases may occur:

\textbf{Subcase 1.}  $I_i=\mathfrak{m}_i$ and $J_i=0$, for some $1\leq i\leq n$.
  Since $I_i=\mathfrak{m}_i$, for some $1\leq j\leq n$ with $i\neq j$, $I_j=R_j$.
  Let
  $y=(y_1,\dots,y_n)\in T_3$  such that
$y_i=K_i$ and $y_j=K_j$. Moreover, the other components of $y$ are $R_k$'s, $k\neq i,j$.
Now, $d(I,y)=1$ and $d(J,y)=2$ and so $D(I|W)\neq D(J|W)$.

\textbf{Subcase 2.}
 $I_i=\mathfrak{m}_i$ and $J_i=R_i$, for some $1\leq i\leq n$.
  Since $J_i=R_i$, for some $1\leq j\leq n$ with $i\neq j$, $J_j=\mathfrak{m}_j$.
  If $I_j=\mathfrak{m}_j$, then let
  $y=(y_1,\dots,y_n)\in T_2$  such that
$y_i=K_i$ and $y_j=\mathfrak{m}_j$. Moreover, the other components of $y$ are  $R_k$'s, $k\neq i,j$.
Now, $d(I,y)=2$ and $d(J,y)=1$ and so $D(I|W)\neq D(J|W)$.
If
$I_j=R_j$, then let
  $y=(y_1,\dots,y_n)\in T_1$  such that
$y_j=K_j$ and the other components of $y$ are  $R_i$'s, $i\neq j$.
Now, $d(I,y)=2$ and $d(J,y)=1$ and so $D(I|W)\neq D(J|W)$.

\textbf{Subcase 3.} $I_i=\mathfrak{m}_i$ if and only if
  $J_i=\mathfrak{m}_i$, for every $1\leq i\leq n$.
Thus there exists $1\leq j\leq n$ such that $I_j=R_j$ and $J_j=0$.
We note that  there exists $k\neq i,j$ $J$ such that $J_k=R_k$ with. Let
  $y=(y_1,\dots,y_n)\in T_3$  such that
$y_i=K_i$, $y_j=0$ and  the other components of $y$ are equal $R_k$'s, $k\neq i,j$.
Now, $d(I,y)=1$ and $d(J,y)=2$ and so $D(I|W)\neq D(J|W)$.
}
\end{proof}

The following corollary is an immediate consequence of Theorem \ref{isomorphismqq}.
\begin{cor}\label{isomorphismqd}
 Suppose that $R\cong R_1\times\cdots \times R_n$, where $R_i$ is an Artinian local ring such that  for every $1\leq i\leq n$, $|A(R_i)^*|= 2$. Then
   $dim_M(PIS(R))=4^n-3^n-1$.
\end{cor}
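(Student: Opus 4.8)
The plan is to obtain this as a direct application of Theorem \ref{isomorphismqq}(1), so the only genuine work is (a) checking that each factor $R_i$ satisfies the standing hypotheses of that theorem and (b) counting ideals. For (a), the one assumption not granted outright is that $R_i$ is a \emph{principal} ideal ring; the others (Artinian, local, and $|A(R_i)^*|\ge 2$) are immediate. So I would first pin down the ideal lattice of $R_i$: being local, both non-trivial ideals lie inside $\mathfrak{m}_i$, one of them is $\mathfrak{m}_i$ itself, and the other is a proper nonzero ideal $\mathfrak{a}_i$ with $\mathfrak{a}_i\subsetneq\mathfrak{m}_i$. Hence the ideals of $R_i$ form the chain $(0)\subsetneq\mathfrak{a}_i\subsetneq\mathfrak{m}_i\subsetneq R_i$ and $|A(R_i)|=4$.

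The key step is to upgrade this chain to principality. I would examine $\mathfrak{m}_i^2$: by Nakayama it is properly contained in $\mathfrak{m}_i$, so $\mathfrak{m}_i^2\in\{(0),\mathfrak{a}_i\}$. The case $\mathfrak{m}_i^2=(0)$ must be excluded, for then $\mathfrak{m}_i$ is a module over the residue field $k_i=R_i/\mathfrak{m}_i$ of length two, i.e. a two-dimensional $k_i$-vector space, and every $k_i$-subspace of $\mathfrak{m}_i$ is an ideal of $R_i$; this produces strictly more than two non-trivial ideals, contradicting $|A(R_i)^*|=2$. Therefore $\mathfrak{m}_i^2=\mathfrak{a}_i\neq(0)$, so $\dim_{k_i}\mathfrak{m}_i/\mathfrak{m}_i^2=1$ and $\mathfrak{m}_i$ is principal by Nakayama. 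An Artinian local ring with principal maximal ideal is a principal ideal ring, so $R_i$ meets all the hypotheses of Theorem \ref{isomorphismqq}. I expect this verification --- in particular ruling out $\mathfrak{m}_i^2=(0)$ --- to be the main (and essentially the only) obstacle.

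With the hypotheses in force, part (b) is a short count. Since $R\cong R_1\times\cdots\times R_n$ and each factor has exactly the four ideals listed above, $R$ has $\prod_{i=1}^{n}|A(R_i)|=4^n$ ideals; deleting the two trivial ideals $(0)$ and $R$ gives $|A(R)^*|=|V(PIS(R))|=4^n-2$. Assuming $n\ge 2$ (so that case (1) of Theorem \ref{isomorphismqq} applies, its hypothesis $|A(R_i)^*|\ge 2$ being met with equality), the formula there yields
$$dim_M(PIS(R))=|A(R)^*|-3^n+1=(4^n-2)-3^n+1=4^n-3^n-1,$$
as claimed. I would also flag the degenerate case $n=1$: there the stated formula returns $0$, whereas Theorem \ref{isomorphismqq}(2) gives $dim_M(PIS(R))=1$ (indeed $PIS(R)\cong K_2$), so the corollary should be read as covering the regime $n\ge 2$ in which case (1) is operative.
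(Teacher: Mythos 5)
Your proposal is correct and follows the same route as the paper, which simply declares the corollary an ``immediate consequence'' of Theorem \ref{isomorphismqq} without supplying the two details you work out: that $|A(R_i)^*|=2$ forces $\mathfrak{m}_i^2=\mathfrak{a}_i\neq(0)$ and hence that $R_i$ is a principal ideal ring (so the theorem's hypotheses really do hold), and that $|A(R)^*|=4^n-2$. Your observation about $n=1$ is also a genuine (if minor) correction to the statement: there Theorem \ref{isomorphismqq}(2) gives $dim_M(PIS(R))=1$ while the displayed formula gives $0$, so the corollary should be restricted to $n\geq 2$.
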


\begin{cor}\label{isomorphismqwen}
Let $R\cong S_1\times S_2\times S_3$, $n,m,k$ are non-negative integer with  $n,m,k\not\in \{1,2,3 \}$, $S_1\cong R_1\times\cdots \times R_n$, $S_2\cong R^{\prime}_1\times\cdots \times R^{\prime}_m$ and $S_3\cong F_1\times \cdots \times F_k$,  where $(R_i, \mathfrak{m}_i)$ is an Artinian local principal ideal ring such that  for every $1\leq i\leq n$, $|A(R_i)^*|\geq 2$, $(R^{\prime}_i, \mathfrak{m}^{\prime}_i)$ is an  Artinian local ring such that  $|A(R^{\prime}_i)^*|= 1$, for every $1\leq i\leq m$ and $F_i$ is a field, for every $1\leq i\leq k$. Then
   $dim_M(PIS(R))=|A(R)^*|-3^{n+m}2^k+2m+k+2$.
\end{cor}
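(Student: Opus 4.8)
The plan is to mirror the proof of Theorem \ref{isomorphismqq}: build a twin partition of $V(PIS(R))$, read off a lower bound from Remark \ref{isomorphisms}, reinforce it by a counting argument on the ideals inside $J(R)$ in the spirit of Theorems \ref{isomorphismq} and \ref{dimprod}, and finally match it with an explicit resolving set. By Lemma \ref{dimfinite} the hypotheses already put us in the finite situation, so I would begin with the two structural facts that control every distance. Since $R$ is a finite product of Artinian local rings, its prime ideals are precisely the $n+m+k$ maximal ideals $\mathfrak{M}_\ell$, each of which is the whole ring in every coordinate except the $\ell$-th, where it is the maximal ideal of that factor (and $0$ when the factor is a field). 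Moreover $diam(PIS(R))\le 2$: two distinct proper ideals either lie in a common maximal ideal $\mathfrak{M}_\ell$, which is then a common neighbour, or have disjoint sets of proper coordinates, in which case the ideal equal to the maximal ideal of the factor in one proper coordinate of each and equal to the whole factor elsewhere is a common neighbour. Hence every distance is $1$ or $2$, and adjacency is decided solely by whether a sum of ideals is a maximal ideal.

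Next I would introduce the relation $\thicksim$ of Theorem \ref{isomorphismqq}, recording for each coordinate only whether the component equals the whole factor, equals the maximal ideal, or is strictly smaller than the maximal ideal; for a field the third option is empty, so a field contributes two types and each remaining factor three, for $3^{n+m}2^{k}$ profiles in all. Exactly as there, two distinct vertices with the same profile cannot be adjacent—adjacency would force both to equal the single maximal ideal $\mathfrak{M}_\ell$—so they have equal neighbourhoods and Remark \ref{isomorphisms} applies. Because each $R_i$ has a nonzero ideal strictly inside $\mathfrak{m}_i$ (as $|A(R_i)^*|\ge 2$), every profile other than the all-unit one is realised by a vertex, giving $3^{n+m}2^{k}-1$ classes and the preliminary bound $dim_M(PIS(R))\ge |A(R)^*|-3^{n+m}2^{k}+1$.

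This falls short of the target by $2m+k+1$, and supplying this difference is the heart of the argument. Here I would show, by the counting in Claim 1 of Theorem \ref{dimprod} together with the forcing of the sets $A,B$ into $W$ in Theorem \ref{isomorphismq}, that no representative may be omitted from the resolving set for $2m+k+1$ further classes: the $m$ ideals that are $0$ in one $R'_i$-coordinate and the whole ring elsewhere, the $m$ that are $\mathfrak{m}'_i$ there, the $k$ maximal ideals coming from the field coordinates, and—once some $R'_i$ or $F_i$ is present—the entire all-radical class. The mechanism is that every ideal contained in $J(R)=\mathfrak{m}_1\times\cdots\times\mathfrak{m}_n\times\mathfrak{m}'_1\times\cdots\times\mathfrak{m}'_m\times 0\times\cdots\times 0$ is adjacent to every maximal ideal and hence indistinguishable from the remaining radical ideals except by vertices drawn from a prescribed small family; counting the radical ideals against that family forces its members into $W$, and because the field and $\mathfrak{m}'_i$ coordinates shrink the family, even the all-radical class becomes unsparable—this is exactly the extra $+1$ that is absent from the pure type-A situation of Theorem \ref{isomorphismqq}. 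Combined with the twin bound this yields $dim_M(PIS(R))\ge |A(R)^*|-3^{n+m}2^{k}+2m+k+2$.

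For the reverse inequality I would let $A$ consist of one canonical representative—using $0$ in every strictly-below-maximal coordinate and the respective $\mathfrak{m}_i$ or whole factor otherwise—of each of the remaining $3^{n+m}2^{k}-2m-k-2$ profiles, and put $W=A(R)^*\setminus A$, so that $|W|$ is the asserted value. As every vertex of $W$ is automatically resolved, it suffices to separate the representatives in $A$, which I would do by the case analysis of Theorem \ref{isomorphismqq}: sort the representatives by their pattern of unit, maximal and smaller coordinates, and for any two differing in a coordinate $i$ exhibit a witness in $W$—one of the forced special ideals, or a vertex matching the canonical pattern except for a single strictly-smaller coordinate—whose sum is maximal with one representative but not the other, so that the distances $1$ and $2$ tell them apart. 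The principal obstacle is the forcing step: the radical-ideal counting must be run simultaneously with the twin collapse over all three factor types, and the delicate point is the all-radical class, whose status flips from sparable to forced precisely when a type-$B$ or type-$C$ factor appears; pinning down the constant, and checking that the formula degenerates correctly to Theorems \ref{dimprod}, \ref{isomorphismq} and \ref{isomorphismqq} when two of $n,m,k$ vanish, is the final point to verify.
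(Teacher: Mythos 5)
Your plan reproduces the paper's argument almost exactly: the paper's own proof is only a sketch that writes down the resolving set $W=V_1\cup V_2\cup V_3$ (the complement of a system of canonical $\{0,R_i,\mathfrak{m}_i\}$-representatives, together with the $2m$ ideals built from the $R'_j$-coordinates and the $k$ field-coordinate maximal ideals) and appeals to "similar proofs" of Theorems \ref{dimprod}, \ref{isomorphismq} and \ref{isomorphismqq}; your twin-class count $3^{n+m}2^k-1$, the deficit $2m+k+1$, and the complement-of-representatives upper bound all agree with that construction, and you flag the forcing step as the remaining work just as the paper leaves it implicit.

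The one concrete point you should fix is the identity of the extra "$+1$" forced class. You call it "the all-radical class," which most naturally reads as the singleton $\{J(R)\}=\{(\mathfrak{m}_1,\dots,\mathfrak{m}_n,\mathfrak{m}'_1,\dots,\mathfrak{m}'_m,0,\dots,0)\}$. That class is \emph{not} forced: in the paper's construction $J(R)$ lies in $A$ (all its components are in the allowed sets) and is therefore omitted from the claimed metric basis, so asserting it must belong to every resolving set would contradict the upper bound. The class that is genuinely kept entirely inside $W$ is the profile in which every type-$A$ coordinate is \emph{strictly below} $\mathfrak{m}_i$ and every other coordinate is $0$ --- the unique nonempty profile whose canonical representative (all zeros) is not a vertex, which is exactly why no representative of it can be placed in $A$ and why Theorem \ref{isomorphismqq} had to adjoin the ad hoc vertex $(K_1,\dots,K_n)$. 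If you run your upper-bound count with $\{J(R)\}$ as the forced class instead, your set $A$ of canonical representatives would have to include the zero ideal as the representative of the all-strictly-below profile, and the cardinality $|W|$ would come out one too large. With that class correctly identified, your outline coincides with the paper's.
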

\begin{proof}
{
Assume that 

  $A=\{(I_1,\dots,I_{n+m+k})\in V(PIS(R))\,\,|\,\, I_i\in \{0,R_i,\mathfrak{m}_i,R^{\prime}_j,\mathfrak{m}^{\prime}_j,F_t\}\,\,\mathrm{for\,\, every}\,\,1\leq i\leq n,\,\,n+1\leq j\leq n+m,\,\,n+m+1\leq t\leq n+m+k\}$,

 $B=\{(R_1,\dots,R_n,0,R^{\prime}_2,\dots,R^{\prime}_m,F_1,\dots,F_k), (R_1,\dots,R_n,R^{\prime}_1,0,R^{\prime}_3,
 \dots,R^{\prime}_m,F_1,\dots,F_k),$ $\dots,(R_1,\dots,R_n,
 R^{\prime}_1,R^{\prime}_2,\dots,R^{\prime}_{m-1},0,F_1,\dots,F_k)\}$,

  $C=\{(R_1,\dots,R_n,\mathfrak{m}^{\prime}_1,R^{\prime}_2,\dots,R^{\prime}_m,F_1,\dots,F_k), (R_1,\dots,R_n,R^{\prime}_1,\mathfrak{m}^{\prime}_2,R^{\prime}_3,
 \dots,R^{\prime}_m,F_1,\dots,F_k),$

 $\dots,(R_1,\dots,R_n,
 R^{\prime}_1,R^{\prime}_2,\dots,R^{\prime}_{m-1},\mathfrak{m}^{\prime}_m,F_1,\dots,F_k)\}$,

$V_1=A(R)^*\setminus A$,

$V_2=B\cup C$ and

   $V_3=\{(R_1,\dots,R_n,R^{\prime}_1,\dots,R^{\prime}_m,0,F_2,\dots,F_k), (R_1,\dots,R_n,R^{\prime}_1,
 \dots,R^{\prime}_m,F_1,0,F_2,\dots,F_k),$
 
 $\dots,(R_1,\dots,R_n,
 R^{\prime}_1,\dots,R^{\prime}_m,F_1,\dots,F_{k-1},0)\}.$

Arrange the elements of $W$ as follows:

$W=\{\underbrace{w_1,\dots,w_{|V_1|}}_{\in V_1},\underbrace{w_{|V_1|+1},\dots,w_{|V_1|+|V_2|}}_{\in V_2},\underbrace{w_{|V_1|+|V_2|+1},\dots,w_{|V_1|+|V_2|+|V_3|}}_{\in V_3}\}.$

 By similar proofs of Theorems \ref{isomorphismq}, \ref{isomorphismqq} and \ref{dimprod}, $W$ is a
 metric basis for $PIS(R)$ and so
    $dim_M(PIS(R))=|W|=|V_1|+|V_2|+|V_3|=|A(R)^*|-3^{n+m}2^k+2m+k+2$.
 }
\end{proof}

We close this paper with the following example which shows that the condition $n,m,k\not\in \{1,2,3 \}$  is required in  Corollary \ref{isomorphismqwen}.
\begin{example}
\end{example}

{\rm $(1)$  Let $R=\mathbb{Z}_4\times  \mathbb{Z}_2$.
By the following figure, we have
$dim_M(PIS(R))=2$.
But $n=0$ and $m=k=1$ do not satisfy in Corollary \ref{isomorphismqwen}.
\hspace{-8cm}
\unitlength=1.5mm
\begin{picture}(60,30)(-30,-20)
\put (8,4){\circle*{1.2}}
\put (9,4){\tiny{$(0)\times \mathbb{Z}_2$}}
\put (8,4){\line (2,-3){8}}
\put (8,4){\line (-2,-3){8}}
\put (0,-8){\circle*{1.2}}
\put (-8,-6){\tiny{$<2>\times \mathbb{Z}_2$}}
\put (0,-8){\line (1,0){16}}
\put (16,-8){\circle*{1.2}}
\put (15,-6){\tiny{$<2>\times (0)$}}
\put (16,-8){\line (1,0){16}}
\put (32,-8){\circle*{1.2}}
\put (29,-6){\tiny{$\mathbb{Z}_4\times (0)$}}

\put (10,-15){$  PIS(\mathbb{Z}_4\times  \mathbb{Z}_2) $}

\end{picture}

\hspace{8cm}

 $(2)$ Let $R=  \mathbb{Z}_4\times  \mathbb{Z}_2\times  \mathbb{Z}_2$ and
$W=\{(0, \mathbb{Z}_2,  \mathbb{Z}_2),( \mathbb{Z}_4,0,  \mathbb{Z}_2),(\mathbb{Z}_4,  \mathbb{Z}_2,0)\}$.
 Then

  $D({(\mathfrak{m}_1,\mathbb{Z}_2,\mathbb{Z}_2)}|W)=(1,2,2)$,

    $D({(\mathfrak{m}_1,0,\mathbb{Z}_2)}|W)=(1,1,2)$,

    $D({(\mathfrak{m}_1,\mathbb{Z}_2,0)}|W)=(1,2,1)$,

    $D({(\mathfrak{m}_1,0,0)}|W)=(1,1,1)$,

    $D({(0,0,\mathbb{Z}_2)}|W)=(2,1,2)$,

    $D({(0,\mathbb{Z}_2,0)}|W)=(2,2,1)$,

     $D({(\mathbb{Z}_4,0,0)}|W)=(2,1,1).$

     Then  $dim_M(PIS(R))=3$. But  $n=0$, $m=1$ and $k=2$ do not satisfy in Corollary \ref{isomorphismqwen}.

{}

\end{document}